\newtheorem{thm}{Theorem}
\newtheorem{prop}[thm]{Proposition}
\newtheorem{conj}[thm]{Conjecture}
\newtheorem{defi}[thm]{Definition}
\theoremstyle{remark}
\newtheorem{remark}[thm]{Remark}
\def\vc{\mathrm{VC}}
\def\lcs{\mathrm{LCS}}
\def\ucs{\mathrm{UCS}}
\def\F{\mbox{\ensuremath{\mathcal F}}\xspace}
\def\G{\mbox{\ensuremath{\mathcal G}}\xspace}
\DeclareMathSymbol{\lsb@l}{\mathalpha}{letters}{`l}
\def\thickhline{%
	\noalign{\ifnum0=`}\fi\hrule \@height \thickarrayrulewidth \futurelet
	\reserved@a\@xthickhline}
\def\@xthickhline{\ifx\reserved@a\thickhline
	\vskip\doublerulesep
	\vskip-\thickarrayrulewidth
	\fi
	\ifnum0=`{\fi}}
\newlength{\thickarrayrulewidth}
\newcommand\restr[2]{{
  \left.\kern-\nulldelimiterspace 
  #1 
  \vphantom{\big|} 
  \right|_{#2} 
  }}
\begin{document}

\title{Unlabeled Compression Schemes Exceeding the VC-dimension}
\author{D\"om\"ot\"or P\'alv\"olgyi\footnote{MTA-ELTE Lend\"ulet Combinatorial Geometry Research Group, Institute of Mathematics, E\"otv\"os Lor\'and University (ELTE), Budapest, Hungary. Research supported by the Lend\"ulet program of the Hungarian Academy of Sciences (MTA), under grant number LP2017-19/2017.}~~and G\'abor Tardos\footnote{Supported by the Cryptography ``Lend\"ulet'' project of the Hungarian Academy of Sciences and by the National
Research, Development and Innovation Office, NKFIH projects K-116769, KKP-133864 and SNN-117879.}}

\maketitle

\begin{abstract}
In this note we disprove a conjecture of Kuzmin and Warmuth claiming that every family whose VC-dimension is at most $d$ admits an unlabeled compression scheme to a sample of size at most $d$.
We also study the unlabeled compression schemes of the joins of some families and conjecture that these give a larger gap between the VC-dimension and the size of the smallest unlabeled compression scheme for them.
\end{abstract}

\section{Introduction}

In statistical learning, it is important to derive information from a large sample space and store only the essential part of it.
The goal of this paper is to study a model of this learning process, and show that certain samples cannot be compressed optimally.

Terminology: if $S$ is a subset of the domain of a function $f$, then we call the restriction $g=f|_S$ the \emph{trace} of $f$ on $S$ and we also call $f$ an \emph{extension} of $g$.

Consider a finite set $B$, and fix a family \F of functions $B\to\{0,1\}$. For $f\in\F$ and $S\subseteq B$
we call the trace $f|_S$ a \emph{partial function} of the family $\F$.
These are studied extensively in learning theory, where our goal is to reconstruct $f|_S$ from some part of it.

\begin{defi}[Littlestone and Warmuth \cite{LW}]
	A \emph{(labeled) compression scheme} for a family $\cal F$ of binary functions with domain $B$ is a pair of operations $(\alpha,\beta)$ such that
		\begin{itemize}
		\item $\alpha$ takes a partial function $g$ of $\F$ as an input (called a labeled sample) and returns a  trace of $g$,
		\item $\beta$ takes the output of $\alpha$ as input and returns an arbitrary function $f:B\to\{0,1\}$,
		\item $\beta(\alpha(g))$ is an extension of $g$ for any partial function $g$ of \F.
	\end{itemize}
\end{defi}

That is, instead of $f|_S$, it is enough to store $\alpha(f|_S)$ so that we can fully recover the value of $f$ over $S$.
The size of the compression scheme $(\alpha,\beta)$ is the maximum size of the domain of $\alpha(g)$.
We denote by $\lcs(\F)$ the minimum size of a compression scheme for $\F$.

\begin{remark}
	Note that the domain $S$ of the partial function $g$ is not required to be determined by the sample $\alpha(g)$, and $S$ is not given the reconstruction process $\beta$ when producing $\beta(\alpha(g))$.
\end{remark}
\begin{remark}
	$\beta(\alpha(f|_S))$ is not required to be from $\F$.
\end{remark}

\begin{defi}[Vapnik-Chervonenkis \cite{VC}]
Let \F be a family of functions $B\to\{0,1\}$.
We say that \F shatters $X \subseteq B$ if every function $g:X\to\{0,1\}$ has an extension in \F.
The \emph{VC-dimension} of \F, $\vc(\F)$, is defined as
the size of the largest $X$ that is shattered by \F.
\end{defi}

Littlestone and Warmuth \cite{LW} observed that $\lcs(\F)\ge \vc(\F)/5$ always holds but could not give any compression scheme for general families whose size depended only on $\vc(\F)$.
Floyd and Warmuth \cite{FW} conjectured that $\lcs(\F)\le \vc(\F)$ always holds.
(There are simple examples that show that this would be sharp.)
Warmuth \cite{W} even offered \$600 reward for a proof that a compression scheme of size $O(d)$ always exists, but this has been proved only in special cases.
Most notably, Floyd and Warmuth \cite{FW} claimed to have proved it for families of VC-dimension $d$ whose size is $\sum_{i=0}^d \binom{n}{i}$, i.e., the maximum size allowed by the Sauer-Shelah lemma; an error in the original argument was recently fixed in \cite{CCMW}.

In 2015, Moran and Yehudayoff \cite{MY} have managed to prove that a compression scheme exists whose size depends only on $\vc(\F)$, but their bound is exponential in $\vc(\F)$.

\begin{defi}[Kuzmin and Warmuth \cite{KW}]
		An \emph{unlabeled compression scheme} for a family $\cal F$ of binary functions with domain $B$ is a pair of operations $(\alpha,\beta)$ such that
		\begin{itemize}
			\item $\alpha$ takes a partial function $g$ of $\cal F$ with domain $S$ (called a labeled sample) and returns a $\alpha(g)$ (called the compressed sample), which is a subset of $S$,
			\item $\beta$ takes the output of $\alpha$ as input and returns an arbitrary function $f:B\to\{0,1\}$,
			\item $\beta(\alpha(g))$ is an extension of $g$ for any partial function $g$ of \F.
		\end{itemize}
\end{defi}

That is, unlike in the case of labeled compression schemes, we do not store the value of $f$ on the compressed sample, but only some selected sample points.
The size of the unlabeled compression scheme $(\alpha,\beta)$ is the maximum size of $\alpha(g)$ for any partial function $g$.
We denote by $\ucs(\F)$ the minimum size of an unlabeled compression scheme for $\F$.
Note that $\ucs(\F)\ge \lcs(\F)$ trivially holds.

Kuzmin and Warmuth \cite{KW} have proved that $\ucs(\F)\ge \vc(\F)$ and conjectured that equality might hold for every family (a strengthening of the earlier conjecture of Floyd and Warmuth).
Similarly to the labeled case, they also claimed a proof for maximum size families; this seems to have contained a similar error, and was also fixed in \cite{CCMW}.

We disprove this last conjecture in a very weak sense;
we exhibit a small family $C_5$ for which $\vc(C_5)=2$ but $\ucs(C_5)=3$.
We also discuss possible ways to amplify this gap, but at the moment we do not know any family \F with $\ucs(\F)> \vc(\F)$ for which $\ucs(\F)\ge 4$, although we exhibit some likely candidates.

\section{Lower bound for $C_5$}\label{sec:lower}

Here we define the family $C_5$ for which $\ucs(C_5)=3>\vc(C_5)=2$, and prove these equalities.
The domain of $C_5$ is five elements and $|C_5|=10$; see Figure \ref{fig:C5}.
We think of the domain $B$ of $C_5$ as the vertices of a regular pentagon. A 0-1 function on this domain belongs to $C_5$ if and only if it takes the values 1-0-0-1 on some four consecutive vertices.

\begin{figure}[h]
	\begin{center}
		\includegraphics[width=5cm]{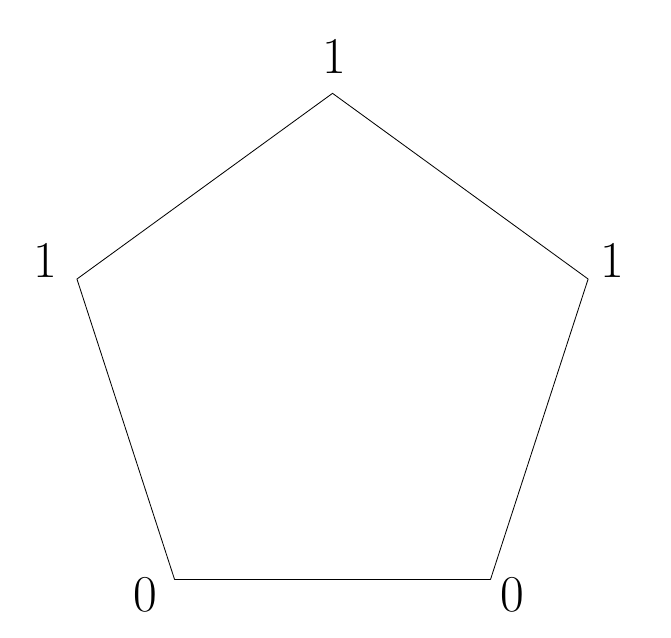}~~~~~~~~~~~
		\includegraphics[width=5cm]{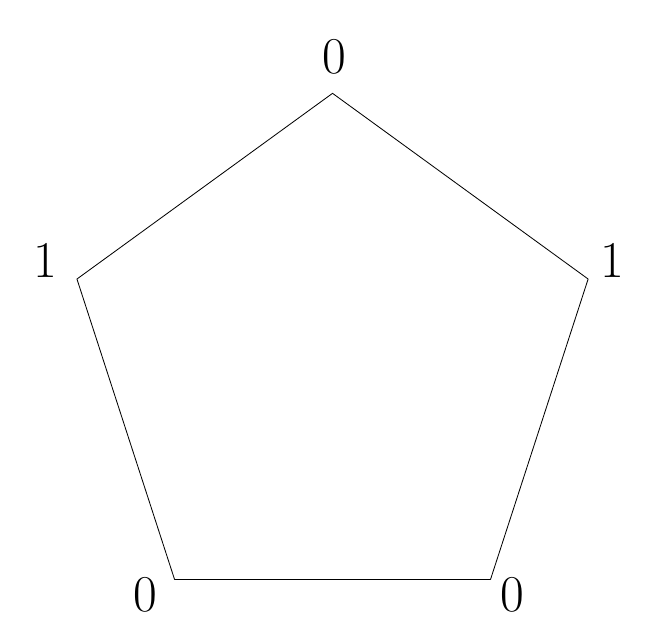}%
	\end{center}
	\caption{$C_5$ consists of the $5$ rotations of the above sets.}\label{fig:C5}
\end{figure}

As we have later found out, this is known in the learning theory literature as `Warmuth's example.' He constructed it as a simple example of a containment maximal family with $\vc(C_5)=2$ that does not reach the maximal size of such a family given by the Sauer-Shelah lemma, which in this case would be  $\sum_{i=0}^2 \binom{5}{i}=16$. 


We will use the property that for any subset $S\subset B$ of size $3$ there are $7$ possibilities for the trace $f|_S$ for $f\in C_5$.
If $S$ consists of three consecutive vertices, then $f|_S$ cannot be constant $0$, while if $S$ consists of three non-consecutive vertices the constant $1$ trace is not possible.
Note that this implies that $C_5$ shatters no three element set but it shatters all two element sets, so its VC-dimension is $2$.

We identify the domain $B$ of $C_5$ with the residue classes modulo 5, with the neighbors of the vertex $i\in B$ being $i+1$ and $i-1$.

\begin{thm}\label{thm}
	$\ucs(C_5)=3$.
\end{thm}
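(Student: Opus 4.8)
The plan is to prove the two inequalities separately; the upper bound $\ucs(C_5)\le 3$ is immediate and all the work is in $\ucs(C_5)\ge 3$. For the upper bound, note that every $f\in C_5$ is the indicator of a diagonal or of three consecutive vertices, so $|f^{-1}(1)|\le 3$. Hence one may take $\alpha(g)=g^{-1}(1)$, which is a subset of the domain $S$ of $g$ of size at most $3$, and let $\beta(T)=\mathbf{1}_T$ be the indicator function of $T$; then $\beta(\alpha(g))=\mathbf{1}_{g^{-1}(1)}$ restricts to $g$ on $S$, so $(\alpha,\beta)$ is an unlabeled compression scheme of size $3$.

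For the lower bound, suppose $(\alpha,\beta)$ is an unlabeled compression scheme for $C_5$ of size at most $2$. Only the values $\beta(T)$ for the $16$ sets $T\subseteq B$ with $|T|\le 2$ are relevant; write $\beta(T)$ for the corresponding function $B\to\{0,1\}$. The defining property says that for every $S\subseteq B$ and every trace $g$ of $C_5$ on $S$ there is a $T\subseteq S$ with $|T|\le 2$ and $\beta(T)|_S=g$. For $|S|\le 3$ this is rigid: the number of subsets of $S$ of size at most $2$ (namely $2$, $4$, $7$ for $|S|=1,2,3$) exactly equals the number of traces of $C_5$ on $S$, so $T\mapsto\beta(T)|_S$ is a \emph{bijection} onto the traces of $C_5$ on $S$. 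In particular every $\beta(T)|_S$ is a trace of $C_5$; taking $|S|=1$ gives $\beta(\{a\})(a)=1-\beta(\emptyset)(a)$ for all $a$; taking $|S|=2$ gives that $\beta(\emptyset),\beta(\{a\}),\beta(\{b\}),\beta(\{a,b\})$ restrict to four distinct functions on $\{a,b\}$; and taking $|S|=3$ gives that for each $3$-set $S$ the seven functions $\beta(T)$, $T\subseteq S$, realize the seven traces on $S$ bijectively. (Taking $S=B$ also shows that $C_5$ itself lies in the image of $\beta$, which is convenient later.)

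The next step is to pin down $\beta(\emptyset)$. Since its restriction to every $3$-subset is a trace of $C_5$, it is never constant $0$ on a triple of consecutive vertices and never constant $1$ on a triple of non-consecutive vertices. On the $5$-cycle the consecutive triples are exactly the complements of edges and the non-consecutive triples exactly the complements of diagonals, so the $0$-set of $\beta(\emptyset)$ contains no consecutive triple and lies inside no diagonal; checking the few possibilities shows that such a set must be an edge or a non-consecutive triple, hence $\beta(\emptyset)$ is the indicator of a consecutive triple or of a diagonal, i.e. $\beta(\emptyset)\in C_5$. Using the dihedral symmetry group of $C_5$ I may therefore assume $\beta(\emptyset)=\mathbf{1}_{\{0,1,2\}}$ or $\beta(\emptyset)=\mathbf{1}_{\{0,2\}}$.

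It remains to reach a contradiction in each of these two cases. Fixing $\beta(\emptyset)$ fixes the value $\beta(\{a\})(a)=1-\beta(\emptyset)(a)$ of each singleton function at its point, and then the requirement that $\beta(\{a\})$ restrict to a trace of $C_5$ on each of the six $3$-subsets through $a$ leaves only a small set of candidates for each $\beta(\{a\})$. Feeding these into the $3$-subset bijections --- which for each $3$-set $\{a,b,c\}$ force $\beta(\{a,b\}),\beta(\{a,c\}),\beta(\{b,c\})$ to realize precisely the three traces not already taken by $\beta(\emptyset),\beta(\{a\}),\beta(\{b\}),\beta(\{c\})$ --- together with the distinctness conditions on all ten $2$-subsets overdetermines the ten functions $\beta(\{a,b\})$, and one checks that no consistent assignment exists. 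I expect this last step to be the main obstacle: the ten local bijections (one per $3$-subset) must be simultaneously realized by a single global $\beta$, and the practical way to control the resulting casework is to use the rotations and the reflection of the pentagon fixing $\beta(\emptyset)$ to cut down the number of subcases, and to propagate the constraints outward starting from the two or three vertices on which $\beta(\emptyset)$ equals $1$.
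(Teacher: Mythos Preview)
Your upper bound and the counting argument giving the bijection $T\mapsto\beta(T)|_S$ for $|S|\le 3$ are correct and match the paper's setup. Your observation that $\beta(\emptyset)\in C_5$ is also correct (the paper does not use it, but it is valid).

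The genuine gap is that you do not carry out the decisive step. After normalizing $\beta(\emptyset)$ you assert that the trace constraints leave ``only a small set of candidates'' for each $\beta(\{a\})$, and that feeding these into the ten 3-set bijections ``overdetermines'' the $\beta(\{a,b\})$ so that ``one checks that no consistent assignment exists''. None of this checking is performed, and you yourself flag it as ``the main obstacle''. In fact the trace condition alone (that $\beta(\{a\})|_S$ be a $C_5$-trace for every $3$-set $S\ni a$) still leaves eight candidates for $\beta(\{a\})$ once $\beta(\{a\})(a)$ is fixed, so without a further idea the casework you are deferring is substantial. As written this is an outline, not a proof.

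The paper avoids the case split on $\beta(\emptyset)$ entirely by analysing $\beta(\{i\})$ directly and uniformly in $i$. From the $3$-set bijection one shows that for every consecutive triple $J\ni i$ the restriction $\beta(\{i\})|_{J\setminus\{i\}}$ is not identically~$0$ (because the unique trace extending the all-$0$ function on $J\setminus\{i\}$ is already taken by some $\beta(Y)$ with $Y\subseteq J\setminus\{i\}$), and dually for non-consecutive triples. Combining these forces $\beta(\{i\})(i\pm1)=1$ and $\beta(\{i\})(i\pm2)=0$, leaving only the single bit $\beta(\{i\})(i)$ undetermined. A pigeonhole on these five bits finishes: if $\beta(\{i\})(i)=1$ for two consecutive values $i,i+1$, then $\beta(\{i\})$ and $\beta(\{i+1\})$ coincide on the $3$-set $\{i-2,i,i+1\}$; if $\beta(\{i\})(i)=0$ for two non-consecutive values $i-1,i+1$, they coincide on $\{i-1,i,i+1\}$. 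Either way the bijection on that $3$-set fails. This replaces your open-ended casework with a short symmetric argument.
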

\begin{proof}
	It is easy to construct an unlabeled compression scheme of size $3$: $\alpha$ can keep the sample points where the value of the function is $1$, and the reconstruction function $\beta$ returns $1$ at every place contained in the compressed sample, and $0$ everywhere else.
	Thus, we only need to prove that $\ucs(C_5)\ge 3$.
	
	Suppose by contradiction that there is an unlabeled compression scheme $(\alpha,\beta)$ of size two.
	Let $X$ be a size $3$ subset of the domain. As we noted above, there are exactly $7$ partial functions $g:X\to\{0,1\}$ of $C_5$. Clearly, $\alpha(g)$ must be a distinct proper subset of $X$ for each.
	As there are $7$ such subsets, we must have a $1$-$1$ correspondence here.
	In particular, for all $Y\subsetneq X$, the $\beta(Y)|_X$ must be distinct partial functions of $C_5$.
	
	Let $J$ be the set of three consecutive positions in the domain and $i\in J$. Let $g$ be the constant $0$ partial function defined on $J\setminus\{i\}$ and $Y=\alpha(g)$. Here $\beta(Y)|_J$ is a partial function of $C_5$ extending $g$, so it must be $0$ on $J\setminus\{i\}$ and $1$ on $i$. Now $\beta(\{i\})|_J$ must be another partial function of $C_5$, therefore $\beta(\{i\})|_{(J\setminus\{i\})}$ cannot be constant $0$. A symmetric argument shows that if $K$ is the set of three non-consecutive positions and $i\in K$, then $\beta(\{i\})|_{(K\setminus\{i\})}$ is not constant $1$.
	
	The observations above imply that $\beta(\{i\})(i-1)=1$. Indeed, if $\beta(\{i\})(i-1)=0$, then applying the observation in the previous paragraph for $J=\{i-2,i-1,i\}$ we obtain $\beta(\{i\})(i-2)=1$ and considering $J=\{i-1,i,i+1\}$ we obtain $\beta(\{i\})(i+1)=1$, but this contradicts our observation about $K=\{i-2,i,i+1\}$. A similar argument shows $\beta(\{i\})(i+1)=1$ as well as $\beta(\{i\})(i-2)=\beta(\{i\})(i+2)=0$. The only remaining value, namely $\beta(\{i\})(i)$ therefore completely determines $\beta(\{i\})$.
	
	Suppose $\beta(\{i\})(i)=1$ holds for at least three different values of $i$; then it must hold for two consecutive values, say $i$ and $i+1$. This completely determines $\beta(\{i\})$ and $\beta(\{i+1\})$ and these functions coincide on $X=\{i-2,i,i+1\}$ contradicting our observation that for distinct proper subsets $Y$ of $X$, the $\beta(Y)|_X$ must also be distinct.
	
	Alternatively we must have $\beta(\{i\})(i)=0$ for at least three different values of $i$. Then it also holds for	two non-consecutive values, say $i-1$ and $i+1$. This completely determines $\beta(\{i-1\})$ and $\beta(\{i+1\})$ and these functions coincide on $X=\{i-1,i,i+1\}$, a contradiction again. The contradictions prove the theorem.	
\end{proof}

\begin{remark}
	Note that the above proof in fact shows that in any unlabeled compression scheme for $C_5$ there is a partial function with domain of size $3$ that gets compressed to its full domain.
\end{remark}

\section{Upper bounds for $C_5$'s}

In this section we sketch some upper bounds, i.e., give unlabeled compression schemes for certain families.
When we receive a sample $f|_S$, we interpret it as receiving a collection of 0's and 1's, and we interpret the compression as \emph{keeping} some of them (though we only keep the locations, not the values).
In the case of $C_5$, when we receive a sample that contains $3$ identical values, then we call them a \emph{triple} 0 or a \emph{triple} 1, depending on the value.
Recall that a triple 1 can only occur at 3 consecutive positions, and a triple 0 can only occur at 3 non-consecutive positions, so the set of positions determines whether it is a triple 0 or a triple 1.
%

\begin{defi}
	The \emph{join} of two families of functions $\F*\G=\{(f,g)\mid f\in \F, g\in \G\}$ is a family over the disjoiont union of there domains where $(f,g)(x)=f(x)$ if $x$ belongs to the domain of \F and $g(x)$ if $x$ belongs to the domain of \G. 
	When we take the join of several copies of the same family, we use the notation
	$\F^{*n}=\underbrace{\F*\ldots*\F}_{n\textit{ times}}.$
\end{defi}

We obviously have $\vc(\F*\G)=\vc(\F)+\vc(\G)$, but for compression schemes only $\ucs(\F*\G)\le \ucs(\F)+\ucs(\G)$ follows from the definition, and equality does not always hold, as the following statement shows. Recall that $\ucs(C_5)=3$ by Theorem~\ref{thm}.

\begin{prop}\label{prop:2C5}
	$\ucs(C_5*C_5)\le 5$.
\end{prop}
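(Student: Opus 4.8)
The plan is to build an explicit unlabeled compression scheme of size $5$ for $C_5 * C_5$ by combining the trivial size-$3$ scheme on each copy in a smarter-than-naive way, exploiting the slack that $C_5$ leaves when a sample on one copy is ``small''. Recall that the obstacle to beating size $6$ is that on a $3$-element subset $S$ of one pentagon all $7$ traces occur, and the cheap scheme (keep the $1$'s) uses $3$ keepers whenever the trace is the triple $1$; symmetrically, if we kept the $0$'s we would spend $3$ keepers on the triple $0$. The key observation is that the triple $1$ and the triple $0$ cannot both be ``expensive'' on the same pentagon at the same time, because a single function of $C_5$, restricted to one copy, has exactly two $1$'s and three $0$'s on the whole $5$-element base set — so on any sample it contributes few $1$'s or few $0$'s, and crucially the \emph{global} pattern tells us which. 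So first I would set up notation: write a sample of $C_5*C_5$ as a pair $(f|_{S_1}, g|_{S_2})$ with $S_1, S_2$ subsets of the two pentagons, and define, for each copy, a ``cheap side'' — keep the $1$'s if the sample on that copy has at most two $1$'s, otherwise keep the $0$'s (note at most two $0$'s then, since a function of $C_5$ has only two $1$'s).

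Next I would handle the bookkeeping that makes this work as an \emph{unlabeled} scheme: $\beta$ receives only a set of kept positions $Y = Y_1 \cup Y_2$, with $Y_i$ in pentagon $i$, and must decide for each $i$ whether $Y_i$ records the $1$'s or the $0$'s of that copy. Here is where I would use the structure of $C_5$ described in the upper-bounds section: the position set of a triple already reveals whether it is a triple $0$ or a triple $1$ (consecutive versus non-consecutive), and more generally a small set of positions within one pentagon, together with the declared ``role'' (keepers-are-$1$'s vs. keepers-are-$0$'s), determines a unique valid trace — essentially because $C_5$ restricted to three consecutive vertices misses only the constant $0$, and restricted to three non-consecutive vertices misses only the constant $1$. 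The decoder $\beta$ would run over the finitely many ways to split $Y$ between the two pentagons and to assign roles, and pick the unique consistent one; the encoder $\alpha$ must be designed so that this is unambiguous. Concretely I expect to define $\alpha$ so that on each copy it keeps the minority value (ties broken toward keeping $1$'s), giving at most $2$ keepers on at least one copy and at most $3$ on the other — but I must rule out the bad case $3 + 3$.

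That bad case is the crux. Naively a triple $1$ on copy $1$ (cost $3$) and a triple $0$ on copy $2$ (cost $3$) would blow the budget. The escape is that a triple $1$ on a $3$-subset $S_1$ forces the global function on pentagon $1$ to be the specific $1\text{-}1\text{-}1$-extendable pattern, which pins down the pentagon-$1$ function up to very few options, so on copy $1$ I need to keep only $\le 2$ positions to recover it (it is enough to keep two points, since a third is redundant once we know we are in the ``three consecutive $1$'s'' regime and know the location); symmetrically a triple $0$ on $S_2$ pins down pentagon $2$ cheaply. So the real scheme is: if a copy's sample forces (via the combinatorics of $C_5$) a unique or nearly-unique extension, keep only a minimal certifying subset (size $\le 2$), and only otherwise fall back to keeping all the $1$'s (size $\le 2$, since a non-triple sample has $\le 2$ ones). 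I would enumerate the cases by $(|S_1|,|S_2|)$ and within each by the trace type on each copy, verifying in every case that the two keeper-counts sum to at most $5$ and that $\beta$ can disambiguate the split and roles from the kept position-set alone. The main obstacle, and the step that needs genuine care rather than routine checking, is exactly this disambiguation: ensuring that no two distinct samples of $C_5*C_5$ produce the same compressed set $Y$, which forces the encoder's tie-breaking and ``certifying subset'' choices to be made consistently and globally, not copy-by-copy; I expect this to be where most of the real work of the proof lies.
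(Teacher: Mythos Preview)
Your plan identifies the right obstacle --- the ``triple'' cases --- and the right structural fact (the positions of a triple already reveal whether it is a triple~$0$ or a triple~$1$), but it stops short of an actual scheme, and the sketch you give does not work as stated.

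First, a factual slip: the functions in $C_5$ do \emph{not} all have exactly two $1$'s; half of them have three (the family consists of the five rotations of $1\text{-}0\text{-}0\text{-}1\text{-}0$ and of $1\text{-}0\text{-}0\text{-}1\text{-}1$). Your conclusion that a sample with $\ge 3$ ones has $\le 2$ zeros happens to survive (such a sample forces the function to have exactly three $1$'s), but the stated reason is wrong, and elsewhere you lean on the false version.

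Second, and this is the real gap, your ``keep the minority value on each copy'' rule uses at most $2+2$ positions, but the decoder cannot recover which role (kept $=1$'s vs.\ kept $=0$'s) each copy is playing: a set $Y_i$ of size $\le 2$ carries no such information, and incompatible samples collapse to the same $Y$. You flag this and defer it to a case analysis, but this is precisely the missing idea, not bookkeeping. Your proposed escape for the $3+3$ case --- replace a triple by a ``minimal certifying subset'' of size $\le 2$ --- has the same defect: the decoder does not see $S_i$ and cannot tell that a certificate is in use rather than an ordinary minority set.

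The paper's scheme sidesteps all of this by \emph{not} compressing the triple. Whenever some copy contains a triple, keep all three of its positions; the mere presence of three kept positions in one copy is the flag. Which copy holds the triple, together with whether the three positions are consecutive (triple~$1$) or not (triple~$0$), gives four distinguishable signals, each prescribing how to interpret the $\le 2$ kept positions in the \emph{other} copy (as $1$'s or as $0$'s). If neither copy has a triple, each has at most two $1$'s and one simply keeps the $1$'s. Every case uses at most $3+2=5$ positions, and the compressed set alone determines the decoding rule. This ``spend three positions to buy a flag'' trick is exactly what your plan is missing.
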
 
\begin{proof}
\begin{table}[h]
	\centering
	\begin{tabular}{|c|c|c|}
		\hline
		\textit{\hspace{5mm}Sample\hspace{5mm}} & \hspace{5mm}\textit{Compression}\hspace{5mm} & \hspace{5mm}\textit{Decoding}\hspace{5mm} \\ \thickhline
		{no triples}      & {keep all 1's}   & {kept to 1, rest 0} \\ \hline
		triple 1 in  $C_5^{(1)}$                        & keep triple and 1's in $C_5^{(2)}$                 & triple from position,\\ \cline{1-2}
		triple 0 in  $C_5^{(1)}$                        & keep triple and 0's in $C_5^{(2)}$                 & kept in $C_5^{(2)}$ same\\ \hline
		triple 1 in  $C_5^{(2)}$                        & keep triple and 0's in  $C_5^{(1)}$                 & triple from position,\\ \cline{1-2}
		triple 0 in $C_5^{(2)}$                       & keep triple and 1's in  $C_5^{(1)}$                 & kept in  $C_5^{(1)}$ opposite\\ \hline
	\end{tabular}
\caption{Compressing $C_5*C_5$.}\label{tab:2C5}
\end{table}
For the proof we need to give an unlabeled compression scheme $(\alpha,\beta)$.
	There are several possible schemes, one is sketched in Table \ref{tab:2C5}.
	The compression $\alpha$ depends on whether there are, and what type of triples in the labeled sample restricted to the domains of the two copies of $C_5$. We denote these domains by $C_5^{(1)}$  and $C_5^{(2)}$.

	If neither of them contains a triple, we just keep the 1's in the labeled sample.
	
	If  $C_5^{(1)}$ contains a triple 1, but $C_5^{(2)}$ does not contain a triple 1, then we still just keep the 1's.

	If  $C_5^{(1)}$ contains a triple 0, but $C_5^{(2)}$ does not contain a triple 0, then we keep all the 0's in the labeled sample.
	
	If  $C_5^{(2)}$ contains a triple 1, but $C_5^{(1)}$ does not contain a triple 0, then keep the triple 1 from $C_5^{(2)}$, and the 0's from $C_5^{(1)}$.

	If  $C_5^{(2)}$ contains a triple 0, but $C_5^{(1)}$ does not contain a triple 1, then keep the triple 0 from $C_5^{(2)}$, and the 1's from $C_5^{(1)}$.

	Note that if the compressed sample contains three positions from either $C_5^{(1)}$ or $C_5^{(2)}$, then those positions formed a triple in the labeled sample and it was a triple 1 in case of three consecutive positions and a triple 0 in case of three non-consecutive positions. This means that the compressed sample determines which one of the five rules was used to obtain it and the decoding $\beta$ can be constructed accordingly.

	Finally, notice that exactly one of the above 5 cases happens for every sample.
	(Although note that for us it would be sufficient if \emph{at least} one of them happened for every sample.)
\end{proof}

This raises the question of how $\ucs(\F^{*n})$ behaves when $n\to \infty$.
We can prove neither any lower bound that would be better than $n\cdot\vc(\F)$ for any $\F$ at all (notice that Proposition \ref{prop:2C5} only provides an upper bound, but we do not know whether in general $\ucs(\F*\G) \ge \ucs(\F)+\ucs(\G)-1$ holds or not), nor show that $\ucs(\F^{*n})\le (1+o(1))n\cdot\vc(\F)$ for every $\F$.
We make the following conjecture.

\begin{conj}
	$\lim_{n\to \infty} \frac{\ucs(C_5^{*n})}n$ exists and is strictly larger than $2$.
\end{conj}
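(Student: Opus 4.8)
\emph{Existence of the limit.} This is the routine half. Write $a_n=\ucs(C_5^{*n})$. Since $C_5^{*(m+n)}=C_5^{*m}*C_5^{*n}$, the inequality $\ucs(\F*\G)\le\ucs(\F)+\ucs(\G)$ gives $a_{m+n}\le a_m+a_n$, so $(a_n)$ is subadditive; by Fekete's lemma $\lim_{n\to\infty}a_n/n$ exists and equals $\inf_n a_n/n$. The Kuzmin--Warmuth bound $\ucs(\F)\ge\vc(\F)$ together with $\vc(C_5^{*n})=2n$ gives $a_n\ge 2n$, and Proposition~\ref{prop:2C5} gives $a_2\le 5$, so $L:=\lim_n a_n/n\in[2,5/2]$. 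The content of the conjecture is the strict inequality $L>2$; since $L=\inf_n a_n/n$, this is equivalent to producing one $\varepsilon>0$ with $a_n\ge(2+\varepsilon)n$ for \emph{every} $n$.

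\emph{Why the cheap routes fail.} A lower bound at a single value $n_0$ is not enough: a subadditive sequence may satisfy $a_{n_0}/n_0>2$ for every $n_0$ and still tend to $2$, as $2n+\sqrt n$ does. A super-additive bound $a_{m+n}\ge a_m+a_n$ cannot hold either, since combined with subadditivity it would force $a_n=3n$, contradicting $a_2\le5$; so exploiting only the join structure is hopeless. And the Sauer--Shelah counting bound --- available because $\alpha$ is injective on total functions (two total functions with the same compressed sample $Y$ would both have to equal $\beta(Y)$), so $10^n=|C_5^{*n}|\le\sum_{i\le a_n}\binom{5n}{i}$ --- yields only $a_n\ge(c+o(1))n$ for a constant $c<1$, the entropy threshold forced by $10^n$ functions on $5n$ points, well below the VC bound $2n$. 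Hence the extra $\varepsilon n$ must be pried out of the internal geometry of $C_5$.

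\emph{The plan for $L>2$.} I would try to amplify the argument of Theorem~\ref{thm}. Suppose $(\alpha,\beta)$ is a scheme for $C_5^{*n}$ of size $d=2n+t$ with $t\le\varepsilon n$. A counting of $|\alpha(f)\cap C_5^{(i)}|$ over total functions $f$ and copies $i$ shows that for a constant fraction of the copies $i$, the compressed sample meets $C_5^{(i)}$ in at most two points for a constant fraction of the functions $f$. On such a copy one wants to rerun the local analysis of Theorem~\ref{thm}: fix a $3$-element $X\subseteq C_5^{(i)}$, use that $C_5$ has exactly seven traces on $X$, and contradict the injectivity of $Y\mapsto\beta(Y)|_X$. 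The catch is that $\beta(Y)|_X$ may depend on the ``context'' $Y\setminus C_5^{(i)}$, so two traces on $X$ might be separated by the context rather than by the at most two kept points inside the copy. The argument I would aim for is an averaging one, over copies, over contexts, and over the choice of $X$: show that correctly decoding all seven local traces in all these near-optimal copies is incompatible with $t\le\varepsilon n$ once $\varepsilon$ is below an absolute threshold, a contradiction.

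\emph{The main obstacle, and a fallback.} The genuine difficulty is exactly this coupling between copies. An unlabeled scheme for $C_5^{*n}$ need not be --- and by Proposition~\ref{prop:2C5} is not optimally --- a product of per-copy schemes; a small global surplus $t$ can be spent very non-uniformly, lavished on a few copies while the rest are served by cross-copy encodings of the kind in Table~\ref{tab:2C5}. Bounding what a bounded surplus buys across many copies is the crux, and is precisely what the authors could not do: even the special case $\ucs(C_5*C_5)\ge5$, which would pin down $a_2=5$, is open and seems to need the same idea. A natural fallback is a self-improving recursion $a_{2n}\ge 2a_n-C$ with $C$ an absolute constant (note $a_2\le5$ forces $C\ge1$): writing $a_n=2n+r_n$ it reads $r_{2n}-C\ge 2(r_n-C)$, so a single $n$ with $r_n>C$, a finite search in principle, would bootstrap to $L>2$. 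But proving even $a_{2n}\ge 2a_n-O(1)$ appears to require the same control of cross-copy interactions, so I do not expect a shortcut.
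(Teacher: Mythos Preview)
The statement you were asked to prove is a \emph{conjecture} in the paper; the authors do not prove it and explicitly list the behaviour of $\ucs(C_5^{*n})$ among their main open questions. There is therefore no proof in the paper to compare your attempt against.

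Your first paragraph is correct and in fact settles the existence half of the conjecture: subadditivity of $a_n=\ucs(C_5^{*n})$ follows from the inequality $\ucs(\F*\G)\le\ucs(\F)+\ucs(\G)$ (which the paper states), and Fekete's lemma then gives that $\lim_n a_n/n$ exists and equals $\inf_n a_n/n\in[2,5/2]$. The paper does not spell this argument out, so this is a small addition, though routine.

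The remaining paragraphs do not prove the strict inequality $L>2$, and you are explicit about this. Your analysis of why the cheap routes fail is sound, and your identification of the cross-copy coupling as the core obstruction matches exactly what the authors themselves could not overcome: they write that they ``can prove neither any lower bound that would be better than $n\cdot\vc(\F)$ for any $\F$'', and even the single instance $\ucs(C_5*C_5)\ge5$ is left open. So what you have written is an honest outline of a possible programme together with a correct proof of the easy half, not a proof of the conjecture; the hard half remains open, for you and for the paper alike.
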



We can prove that $\ucs(C_5^{*n})\le 2n+1$ for $n\le 5$.
Since the compression schemes are based on similar ideas, we only sketch the scheme for $n=5$.

\begin{prop}\label{prop:5C5}
	$\ucs(C_5^{*5})\le 11.$
\end{prop}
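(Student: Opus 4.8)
The plan is to generalize the two-copy scheme of Proposition~\ref{prop:2C5} to five copies, exploiting the cyclic structure of the five slots $C_5^{(1)},\dots,C_5^{(5)}$ in parallel with the cyclic structure inside each $C_5$. As in the $n=2$ case, the key resource is the following dichotomy: on any $3$-element subset of a single $C_5$ a labeled sample either contains a \emph{triple} (three equal values, whose common value is forced by whether the three positions are consecutive or not) or it does not, and in the latter case the at most $2$ repeated-or-unique values present can be pinned down cheaply. So in each copy we have a ``cheap state'' (no triple, costing at most $2$ kept points when we keep all the $1$'s, or all the $0$'s) and an ``expensive state'' (a triple, costing $3$ kept points but then revealing which value to decode and, crucially, broadcasting one bit of global information to the decoder for free, since the decoder can read off from the positions whether a kept triple is a triple $0$ or a triple $1$).

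First I would set up the bookkeeping: total budget $11 = 2\cdot 5 + 1$, so across the five copies we may pay the ``extra $+1$'' of a triple in at most one copy; every other copy must be handled in its cheap $2$-point mode. Thus the compression $\alpha$ must choose, for each labeled sample, at most one distinguished copy in which it keeps a triple (if any copy even admits one), and in the remaining four copies it must keep either all $1$'s or all $0$'s — a binary choice per copy that the decoder has to be able to recover. The mechanism for recovering those four binary choices is exactly the one used in Table~\ref{tab:2C5}: the single kept triple, via its position pattern, tells the decoder its own value, and then a fixed convention keyed to that value (and to the index of the distinguished copy, which is also visible from where the triple sits) dictates, for each other copy, whether the kept points in that copy are to be decoded as $1$'s-with-rest-$0$ or as $0$'s-with-rest-$1$. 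When no copy contains a triple at all, we fall back to the global ``keep all $1$'s, rest $0$'' rule, which costs at most $2$ per copy but actually much less, and is unambiguous because the absence of any three-element monochromatic block in the compressed sample signals this case.

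The substance of the argument — and the place where $n=5$ is special — is checking that the case analysis is \emph{exhaustive and consistent}: that for every labeled sample $f|_S$ with $f=(f_1,\dots,f_5)\in C_5^{*5}$ we can name a legal distinguished copy and legal binary modes for the others so that (a) the total kept size is $\le 11$, (b) distinct samples never compress to sets whose decodings clash, and (c) $\beta$ applied to the compressed set really does extend $f|_S$. Point (a) is immediate from the budget discussion once we verify that at most one triple needs to be kept; the real work is (b)–(c), i.e.\ designing the conventions so that the decoder, seeing only a subset of positions, can unambiguously reconstruct which rule was used. I expect the main obstacle to be precisely this: making the per-copy mode assignments depend only on information the decoder can extract from the \emph{unlabeled} compressed sample (the position of the unique kept triple, if present, and the index of its copy), while simultaneously guaranteeing that in a copy forced into, say, ``keep all $1$'s'' mode the trace $f_j|_{S\cap C_5^{(j)}}$ genuinely has no triple $0$ (so that keeping the $1$'s is a valid $\le 2$-point encoding recoverable by $C_5$'s own structure) — this is the five-copy analogue of the observation at the end of the proof of Proposition~\ref{prop:2C5} that exactly one of the listed cases occurs, and verifying it requires a short but careful finite check exploiting that a triple $0$ and a triple $1$ in the same $C_5$ cannot coexist. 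Since the paper only asks for a sketch of this scheme, I would present the rule table in the style of Table~\ref{tab:2C5}, state the exhaustiveness claim, and indicate the finite verification rather than carrying out all $5$ (or $10$) subcases in full.
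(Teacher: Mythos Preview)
Your budget analysis is right --- $11=2\cdot5+1$ means at most one copy can be charged the full triple cost --- but the scheme you describe cannot be made to work, and the gap is a counting obstruction, not a matter of filling in a finite check.

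Consider a labeled sample on the full base set, so that every copy $C_5^{(j)}$ carries a triple, of type $t_j\in\{0,1\}$. If copy $j$ is not the distinguished one, your scheme keeps either all $1$'s or all $0$'s there; but if $t_j=1$ then keeping the $1$'s costs $3$, so the mode is \emph{forced} to ``keep $0$'s'', and symmetrically $t_j=0$ forces ``keep $1$'s''. Thus, once you distinguish copy $i$, the required mode vector on the other four copies is determined by $(t_j)_{j\ne i}$. On the other hand, your decoder's mode vector is a fixed function of the signal $(i,t_i)$, of which there are only $10$. So each signal can serve exactly one full pattern $(t_1,\dots,t_5)$, and over all five choices of $i$ at most $10$ of the $2^5=32$ patterns are covered. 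Hence for at least $22$ full-sample patterns there is no legal choice of distinguished copy, and your scheme fails.

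The paper's scheme escapes this by two ideas absent from your plan. First, it does \emph{not} insist on keeping at most one full triple: in several cases two or more full triples are kept. Second --- and this is what makes the budget work --- it compresses the remaining triples to just $1$ or $2$ points using a ``central element'' trick: among any three positions in a single $C_5$ there is a unique central one (equidistant from the other two), and the two non-central positions determine the third; the same holds for three of the five copies $C_5^{(0)},\dots,C_5^{(4)}$ viewed cyclically modulo $5$. This lets the encoder store a triple $0$ by its single central element (the decoder recovers the other two once it knows it is a triple $0$), or by its two non-central elements, and similarly lets it encode which of three copies is ``central'' among three specified ones. The resulting case analysis (six cases rather than your two) is what actually achieves size $11$.
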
 
\begin{proof}
	\begin{table}[h]
		\centering
		\begin{tabular}{|m{6cm}|m{6cm}|}
			\hline
			\textit{\hspace{20mm}Sample\hspace{20mm}} & \hspace{20mm}\textit{Compression}\hspace{20mm} \\ \thickhline
			{no triple 1}      & {keep all 1's}   \\ \hline
			
			triple 1 in some $C_5^{(i)}$\newline
			 but no triple 0 anywhere     & keep triple 1 in  $C_5^{(i)}$\newline and 0's in other  $C_5^{(j)}$'s     \\ \cline{1-2}
			
			exactly one triple 0   & keep  0's \\ \hline
			
			exactly one triple 1\newline and at least two triple 0's  & fix two triple 0's and one triple 1; keep non-central triples and central element of central triple, and 1's from rest \\ \hline
			
			at least two triple 1's and at least two triple 0's, and fifth does not have exactly one 1  & keep triple 1's and central elements of triple 0's, and 1's from fifth \\ \hline
			
			two triple 1's and at least two triple 0's, and fifth has exactly one 1  & keep triple 1's and non-central elements of triple 0's, and 1 from fifth  \\ \hline
			
		\end{tabular}
		\caption{Compressing $C_5^{*5}$.}\label{tab:5C5}
	\end{table}
We denote the $5$ copies of $C_5$'s by $C_5^{(0)},\ldots,C_5^{(4)}$, with indexing $\bmod~5$.

Among any three positions in a single $C_5^{(i)}$ there is a unique ``central'' element: the one that is equidistant from the other two elements. We use that the two non-central elements determine the central element uniquely. Although the central element is not enough to determine the other two elements, it becomes enough once we know whether they are the positions in a triple 0 or a triple 1.

Similarly, among any three distinct sets $C_5^{(i)}$, $C_5^{(j)}$ and $C_5^{(k)}$, there is a unique central one, whose index is equidistant (modulo 5) from the other two indices.
E.g., from $C_5^{(0)}$, $C_5^{(2)}$ and $C_5^{(3)}$ the central one is $C_5^{(0)}$, while from $C_5^{(0)}$, $C_5^{(3)}$ and $C_5^{(4)}$ the central one is $C_5^{(4)}$. We use again that the non-central copies determine the central one uniquely.

The compression algorithm is sketched in Table \ref{tab:5C5}.
This Table needs to be interpreted in a similar fashion as Table \ref{tab:2C5}, this time we omit the lengthy description of the case analysis.
Note that for some labeled samples there are more rules to choose from for the compression -- in this case, we pick arbitrarily. It is important, however that there is always at least one rule that applies.

We have also omitted the decompression rules, as the compressed sample always determines which rule was used to obtain it.
To prove this statement, notice that we only keep three position of the same $C_5^{(i)}$ if they form a triple in the labeled sample. If the first rule is used, no triple is kept. In case the second or third rule is used, a single triple 1 or triple 0 is kept, respectively. If the fourth rule is used, then two triples are kept, not both triple 1's. Finally if either of the last two rules are used, then at least two triple 1's are kept. The compressed sample produced by the last two rules are distinguished by the number of elements kept in the sets $C_5^{(i)}$: if it is $3+3+2+2+1$ in some order, then the last rule was used, otherwise the fifth rule. Once we know which rule produced the compressed sample the decoding can be done accordingly.
\end{proof}

\section{Further results}

In this section we mention some further results. 
We start by defining some further families.

$C_5^-$ is obtained from $C_5$ by deleting one function.
Because of the symmetry, it does not matter which one, so we delete the function 0-1-1-1-0. Here we represent functions by the sequence of their values on 0, 1, 2, 3, 4.
In this family, still any two positions can take any values ($4$ possibilities each), but for some triples we have only $6$ possibilities (instead of $7$).

$C_4$ is the restriction of $C_5$ to four elements of the domain.
Again, by symmetry it does not matter which four, so we delete the central element 2.
This is useful, because this way $C_4$ also becomes a restriction of $C_5^-$.

\begin{prop}\label{prop:C5-}
	$\ucs(C_4)=\ucs(C_5^-)=2$.
\end{prop}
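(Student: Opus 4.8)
The plan is to establish the lower bound $\ucs(C_4)=\ucs(C_5^-)=2$ by first noting the trivial lower bound, then constructing explicit unlabeled compression schemes of size $2$. For the lower bound $\ucs(\F)\ge 2$, observe that both $C_4$ and $C_5^-$ shatter some two-element set (indeed $C_4$, and hence $C_5^-$, still contains all four trace patterns on any two positions), so by the Kuzmin--Warmuth bound $\ucs(\F)\ge \vc(\F)\ge 2$. It remains to exhibit a scheme of size $2$ for each. Since $C_4$ is a restriction of $C_5^-$, it suffices to give a size-$2$ scheme for $C_5^-$ and then check it restricts sensibly; alternatively, because $C_4\subset C_5^-$ and restricting the domain cannot increase $\ucs$ in the relevant sense, one can treat them together, but the cleanest route is to handle $C_5^-$ directly and derive $C_4$ by deleting the central element.

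First I would set up, as in the proof of Proposition~\ref{prop:2C5}, the viewpoint that a sample $f|_S$ is a collection of $0$'s and $1$'s at positions of $S$, and compression means selecting a subset of those positions. The key structural facts are the ones already recorded: a triple $1$ can only sit on three consecutive vertices and a triple $0$ only on three non-consecutive vertices, so the position set of a kept triple reveals its value; and in $C_5^-$ the forbidden function $0$-$1$-$1$-$1$-$0$ removes exactly one trace on certain triples, reducing the count from $7$ to $6$. The scheme I propose: if the sample contains a triple $1$, keep those three positions (three consecutive vertices) and decode them to $1$ and everything else to $0$; if the sample contains a triple $0$ but no triple $1$, keep those three positions and decode them to $0$, the rest to $1$ --- but one must check which of these two ``rest'' extensions actually lies in $C_5^-$, and here the deleted function $0$-$1$-$1$-$1$-$0$ is precisely what is needed to make the bookkeeping work, since its absence frees up a trace pattern. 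If the sample has neither triple, it has at most two equal values among at most... --- no, a sample on all five points with no triple is impossible for $C_5^-$ unless it already has a triple, so the no-triple case only arises on samples of size at most $4$, where at most two $1$'s (or at most two $0$'s) occur and we simply keep the $1$'s (respectively $0$'s), whichever has size at most $2$.

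The main obstacle I expect is the case bookkeeping for samples of size $3$ and $4$: one must verify for every partial function $g$ of $C_5^-$ that the chosen $\alpha(g)$ is a subset of $\mathrm{dom}(g)$ of size at most $2$, that distinct rules produce distinguishable compressed samples (the triple-$1$, triple-$0$, and no-triple cases are separated by the position pattern of the kept set, exactly as in Table~\ref{tab:2C5}), and --- the delicate point --- that $\beta$ of each compressed sample really extends $g$ and that the constant-ish decodings land inside $C_5^-$ when required. The role of removing $0$-$1$-$1$-$1$-$0$ is to eliminate the one sample that would otherwise force a third kept point: on the triple that had $7$ traces in $C_5$, we now have only $6$, matching the $6$ proper subsets available, so the $1$-$1$ correspondence that failed in the proof of Theorem~\ref{thm} now has enough room. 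For $C_4$, the same scheme applied to the four-element base set (the central vertex $2$ deleted) inherits all these properties, and since $C_4$ has no triple $0$ at all (three non-consecutive vertices require five points), the scheme simplifies further to: keep the triple $1$ if present, else keep the $1$'s (there are at most two). Checking that these restricted schemes remain valid and size $2$ completes the proof.
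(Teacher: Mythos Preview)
Your proposed scheme is not a size-$2$ scheme. In the cases ``sample contains a triple $1$'' and ``sample contains a triple $0$'' you write ``keep those three positions'', which makes $|\alpha(g)|=3$. You even note later that one must check $|\alpha(g)|\le 2$, but the rule you actually wrote down violates this in every triple case, and triples occur already for samples of size $3$. So the core construction is missing, not just the bookkeeping.

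There is also no obvious way to repair the rule while staying within size $2$. The trick you borrow from Proposition~\ref{prop:2C5}---that the \emph{positions} of a kept triple reveal whether it was a $0$-triple or a $1$-triple---works precisely because three positions are kept; with at most two positions there is no analogous ``consecutive vs.\ non-consecutive'' signal to separate the cases. In particular, even restricting to a single $3$-element domain $S$: there are still triples $S$ on which $C_5^-$ has all $7$ traces (removing $0\text{-}1\text{-}1\text{-}1\text{-}0$ only kills one trace on \emph{some} triples), so you still need all $7$ proper subsets of $S$ as compressed samples, and a scheme of the form ``kept positions $\mapsto 1$, rest $\mapsto 0$'' cannot realize that bijection. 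Your side claim that ``$C_4$ has no triple $0$'' is also false: e.g.\ the $C_4$-function $1\text{-}0\text{-}0\text{-}0$ on positions $0,1,3,4$ has three zeros; likewise $0\text{-}0\text{-}0\text{-}1$.

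The paper's proof is quite different in spirit: it does not look for a uniform keep-the-$1$'s / keep-the-$0$'s rule at all. Instead it specifies $\beta$ by an explicit table on \emph{all} subsets of size at most $2$ (using the reflection through the central vertex to halve the list), and then checks that every partial function of $C_5^-$ admits a compatible $\alpha$. The resulting $\beta$ is far from monotone in the kept set---for instance $\beta(\{0\})=0\text{-}0\text{-}1\text{-}0\text{-}1$ takes value $0$ at the kept position---so the intuition ``kept $=1$'' that drives your proposal is not what makes the size-$2$ scheme work.
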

\begin{proof}
	\begin{table}[h]
		\centering
		\begin{tabular}{|c|c|c|}
			\hline
			\textit{\hspace{20mm}Compression\hspace{20mm}} & \hspace{20mm}\textit{Decoding}\hspace{20mm} \\ \thickhline
			 $\emptyset$ & 1-0-0-0-1      \\ \hline
			 x-.-.-.-.   & 0-0-1-0-1      \\ \hline
			 .-x-.-.-.   & 1-1-0-0-1      \\ \hline
			 .-.-x-.-.   & 1-0-1-0-1      \\ \hline
			 x-x-.-.-.   & 0-1-0-0-1      \\ \hline
			 x-.-x-.-.   & 0-1-0-0-1      \\ \hline
			 x-.-.-x-.   & 0-0-1-1-1      \\ \hline
			 x-.-.-.-x   & 0-1-0-1-0      \\ \hline
			 .-x-x-.-.   & 1-1-1-0-0      \\ \hline
			 .-x-.-x-.   & 0-1-0-1-0      \\ \hline	
		\end{tabular}
		\caption{Compressing $C_5^-$; elements of the compressed sample are marked with an x.}\label{tab:C5-}
	\end{table}
	The lower bounds follow from $2=\vc(C_4)\le \ucs(C_4)\le \ucs(C_5^-)$.
	For the upper bound, we need to give a compression scheme of size two for $C_5^-$.
	A possible algorithm is sketched in Table \ref{tab:C5-}.
	Here we list the decoding of compressed samples only. We maintain a symmetry for the reflection to the central element: If the compressed sample $B$ is obtained from another compressed sample $A$ by reflection, then the decoding $\beta(B)$ is also obtained from $\beta(A)$ the same way. Accordingly, we only list one of $A$ and $B$ in the Table.
	We omit the lengthy case analysis of why this compression scheme works.	
%
%
%
%
%
\end{proof}

Now we continue by definining two more families.\\

$P(k)$ is the family of all $2^k$ boolean functions on a domain of $k$ elements. Notice that $P(k)=P(1)^{*k}$.
As $P(k)$ shatters its entire domain, we have $\vc(P(k))=k$. We also have $\ucs(P(k))=k$ as $\vc(P(k))\le \ucs(P(k))$ and $\ucs(P(k)\le k$ is shown by the simple unlabeled compression scheme that keeps the 1's in the labeled sample.
On the other hand, $\lcs(P(k))$ can be smaller, e.g., $\lcs(P(2))=1$.

$W_6$ is a symmetrizing extension of $C_5$, with the same number of functions, but one more base element. One can obtain it from $C_5$ by adding an extra element to the base and extending each function in the family to the new element such that the function has three zeros and three ones.
Figure \ref{fig:W6} depict two functions of $W_6$. The other eight functions are the rotations of these two. In the family $W_6$ the extra element plays no special role, in fact, $W_6$ is two-transitive, i.e., any pair of elements of its domain can be mapped to any other pair of elements with an automorphism. If we convert the functions of $W_6$ to $3$-element sets, we get the unique $2-(6,3,2)$ design. 
Since $W_6$ is an extension of $C_5$, $\vc(C_5)\le \vc(W_6)$ and $\ucs(C_5)\le \ucs(W_6)$ -- it is easy to check that we have equality in both cases, i.e., $\vc(W_6)=2$ and $\ucs(W_6)=3$.

\begin{figure}[h]
	\begin{center}
		\includegraphics[width=5cm]{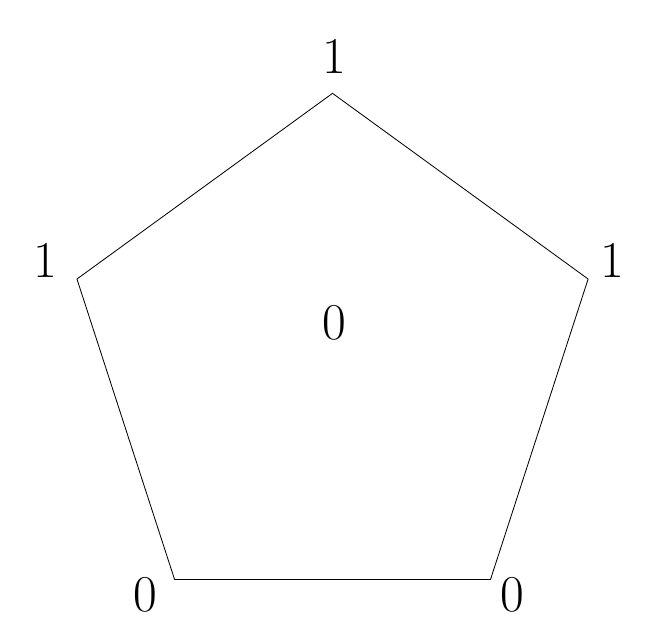}~~~~~~~~~~~
		\includegraphics[width=5cm]{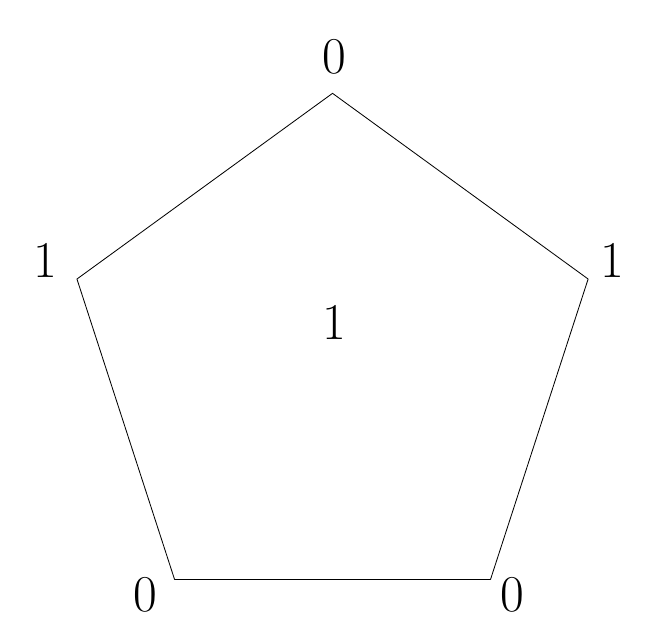}%
	\end{center}
	\caption{$W_6$ consists of the $5$ rotations of the above sets.}
	\label{fig:W6}
\end{figure}

Some further non-trivial upper bounds can be obtained for the joins involving these families.

\begin{prop}\label{prop:W6P1}
$\ucs(W_6* P(1))=3$.
\end{prop}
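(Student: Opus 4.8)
The plan is to obtain the lower bound for free and the upper bound by an explicit size‑$3$ scheme. For the lower bound, VC‑dimension is additive under joins and $\vc(W_6)=2$, $\vc(P(1))=1$, so $\vc(W_6*P(1))=3$, whence $\ucs(W_6*P(1))\ge 3$ by the Kuzmin--Warmuth inequality $\ucs\ge\vc$. So the whole content is to exhibit an unlabeled compression scheme of size $3$.

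Write $U$ for the six‑element base set of $W_6$, so that $W_6*P(1)$ lives on $U\cup\{p\}$, and identify a labeled sample with a partial function $h$. The only fact about $W_6$ I would use is the exact analogue of the property of $C_5$ exploited in Section~\ref{sec:lower}: every function of $W_6$ is a three‑element subset of $U$ (a ``block''), and dually its $0$‑set is a three‑element subset of $U$ (the complement of a block, which is itself never a block, i.e.\ $W_6$ is not closed under complementation); consequently, if $h|_U$ is constant on three points of $U$, those three points form a block if the common value is $1$ and the complement of a block if it is $0$, and in either case the function $f\in W_6$ agreeing with $h|_U$ is uniquely determined.

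For the scheme $(\alpha,\beta)$: if $h(p)$ is undefined or $0$, let $\alpha(h)$ be the set of sampled $1$'s of $h$ (these lie in $U$, and there are at most $3$ of them since they are contained in a block); if $h(p)=1$, let $\alpha(h)$ be the set of sampled $0$'s of $h$ (in $U$, at most $3$ since they are contained in the complement of a block) together with $p$, except that if there are already exactly $3$ sampled $0$'s we keep only those three. Then $|\alpha(h)|\le 3$ and $\alpha(h)$ is a subset of the domain of $h$ in every case. On a compressed set $Y$ of size at most $3$, the decoding $\beta$ distinguishes four mutually exclusive cases: if $Y$ is a block of $W_6$, return the function that is $1$ on $Y$ and $0$ at $p$; if $Y\subseteq U$, $|Y|=3$, and $Y$ is not a block (so $Y$ is the complement of a unique block $f$), return $(f,1)$, the function that is $0$ exactly on $Y$; if $p\in Y$, return the function that is $0$ exactly on $Y\setminus\{p\}$ and $1$ elsewhere; and if $Y\subseteq U$ with $|Y|\le 2$, return the function that is $1$ exactly on $Y$. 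These cases partition the subsets of $U\cup\{p\}$ of size at most $3$ (here one uses that no block of $W_6$ is the complement of a block), so $\beta$ is well‑defined, and the criterion just listed is exactly what lets $\beta$ recover which rule of $\alpha$ produced $Y$.

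It then remains to check $\beta(\alpha(h))\supseteq h$ for every sample $h$, which is short: in the two cases where $\alpha(h)$ is a block or the complement of a block, all three $1$'s (resp.\ $0$'s) of the underlying function have been sampled, so by the structural fact the relevant $f\in W_6$ is pinned down and $(f,0)$ (resp.\ $(f,1)$) is returned, which extends $h$; in the other two cases the returned function plainly agrees with $h$ on its domain. Consistency of $\beta$ on a set arising from several samples is then automatic from $\beta(\alpha(h))\supseteq h$. I expect the only real design point — and the place where a naive attempt breaks down — to be the decision to compress via the $0$'s when $h(p)=1$: the obstruction to an ``always keep the $1$'s'' scheme is the single function $(f,1)$, whose $1$‑set $f\cup\{p\}$ has four points; but its $0$‑set is the three‑point complement of the block $f$, which fits the budget and, by landing in the ``$Y$ is the complement of a block'' branch of $\beta$, records the bit $h(p)=1$ at no extra cost.
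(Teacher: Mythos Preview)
Your argument is correct and is essentially the paper's own scheme: your three branches (keep the sampled $1$'s when $h(p)\ne 1$; keep the three sampled $0$'s when $h(p)=1$ and a triple $0$ is present; keep $p$ together with the at most two sampled $0$'s otherwise) match the three rows of the paper's Table~\ref{tab:W6P1} exactly, and your decoding coincides with theirs. The only extra ingredient you make explicit is the structural fact that in $W_6$ the complement of a block is never a block, which is indeed what makes the ``$|Y|=3$, $Y\subseteq U$'' branch of $\beta$ unambiguous; this is true for the unique $2$-$(6,3,2)$ design and is implicit in the paper's use of ``triple from position'' decoding.
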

\begin{proof}
		\begin{table}[h]
		\centering
		\begin{tabular}{|c|c|c|}
			\hline
			\textit{\hspace{5mm}Sample\hspace{5mm}} & \hspace{5mm}\textit{Compression}\hspace{5mm} & \hspace{5mm}\textit{Decoding}\hspace{5mm} \\ \thickhline
			extra is not 1          & keep 1's of $W_6$       & kept 1, others 0 \\ \hline
			extra is 1 and triple 0 & keep triple 0        & kept 0, others 1 \\ \hline
			extra is 1, no triple 0 & keep extra and 0's     & extra 1, rest of kept 0, others 1 \\ \hline

		\end{tabular}
		\caption{Compressing $W_6* P(1)$.}\label{tab:W6P1}
	\end{table}
	The compression algorithm is sketched in Table \ref{tab:W6P1}, with `extra' denoting the only bit of the domain of $P(1)$.
\end{proof}

Note that $C_5*P(1)$ is obtained from $W_6*P(1)$ by restricting the domain and such a restriction cannot increase the value of $\ucs$, so this also implies $\ucs(C_5* P(1))=3$. From this we can easily get another proof for $\ucs(C_5* C_5)\le 5$ as follows.
We have $C_5\subset P(1)*C_4$, thus $\ucs(C_5*C_5)\le\ucs(C_5*P(1)*C_4)\le\ucs(C_5*P(1))+\ucs(C_4)\le3+2$, using Proposition \ref{prop:C5-}.

\begin{prop}
	$\ucs(W_6* W_6)\le 5$.
\end{prop}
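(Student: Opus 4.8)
The plan is to construct an unlabeled compression scheme of size $5$ directly, exploiting the fact that every function of $W_6$ takes the value $1$ exactly three times (and $0$ exactly three times). Write $B_1,B_2$ for the two base copies, so that $W_6*W_6$ lives on $B_1\sqcup B_2$. The first thing I would try is to imitate the five-rule table used for $C_5*C_5$ in Proposition~\ref{prop:2C5} --- ``keep a triple from one copy and all the $1$'s (or all the $0$'s) from the other''. I expect this to fail for one reason: a $W_6$-function, unlike a $C_5$-function, has as many $0$'s as $1$'s, so a copy that is sampled on its whole base simultaneously exhibits a triple $0$ and a triple $1$; the five-rule case analysis then leaves the ``both copies fully sampled'' situation uncovered, since keeping a full triple from each copy already costs $6$. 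That last case is where essentially all of the work sits, and the fix is to route such samples into deliberately ``lopsided'' compressed sets that the greedy rules never produce.

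Here is the scheme I would actually use. Call a partial function $g$ of $W_6*W_6$, with domain $S_1\sqcup S_2$, \emph{heavy} if $g$ attains the value $1$ at least three times on $S_1$ and at least three times on $S_2$. Because a $W_6$-function has only three $1$'s, if $g$ is heavy and $g$ is a trace of $(f_1,f_2)$, then the $1$-set of $f_i$ must lie inside $S_i$ and hence equals the set of $1$-positions of $g$ inside $B_i$; so a heavy $g$ pins down the pair $(f_1,f_2)=:(f_1^g,f_2^g)$ uniquely. Fix once and for all an injection $\mu$ from $W_6\times W_6$ into the family of subsets $T\subseteq B_1\sqcup B_2$ with $|T|\le 5$ and ($|T\cap B_1|\ge 4$ or $|T\cap B_2|\ge 4$); such $\mu$ exists since the source has $|W_6|^2=100$ elements while already the sets with $|T\cap B_1|=4$ and $|T\cap B_2|\le 1$ number $\binom{6}{4}\cdot 7 = 105$. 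Now let $\alpha(g)=\mu(f_1^g,f_2^g)$ if $g$ is heavy, and $\alpha(g)=\{x\in S_1\sqcup S_2 : g(x)=1\}$ otherwise; and let $\beta(T)=\mu^{-1}(T)$ when $|T\cap B_1|\ge 4$ or $|T\cap B_2|\ge 4$ (defining $\beta(T)$ arbitrarily, say the all-$0$ function, if such a $T$ is not in the range of $\mu$), while $\beta(T)$ is the function ``$1$ on $T$, $0$ off $T$'' when $|T\cap B_1|\le 3$ and $|T\cap B_2|\le 3$.

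The verification should be short. Size: a heavy $g$ compresses to a set of size $\le 5$ by the choice of $\mu$; a non-heavy $g$ has at most $2$ ones on one of $S_1,S_2$ and at most $3$ on the other (a $W_6$-trace never has more than three $1$'s), so $\alpha(g)$ has at most $5$ elements. Correctness: if $g$ is heavy then $\alpha(g)=\mu(f_1^g,f_2^g)$ has $\ge 4$ points in one copy and lies in the range of $\mu$, so $\beta(\alpha(g))=(f_1^g,f_2^g)$, which extends $g$; if $g$ is not heavy then $\alpha(g)$ has $\le 3$ points in each copy, so $\beta(\alpha(g))$ is the function that is $1$ exactly on the $1$-positions of $g$, which restricts to $g$ on $\mathrm{dom}(g)$. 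Crucially, the heavy and non-heavy compressed samples are separated by whether some copy contains $\ge 4$ of their points, so $\beta$ is well defined and never has to serve two incompatible partial functions at once. The one step I would check with real care is exactly the obstacle flagged above: that a heavy $g$ --- in particular any $g$ that reveals all six coordinates of one or both copies --- genuinely determines $(f_1^g,f_2^g)$ (it does, purely by the three-$1$'s property) and is therefore handled by $\mu$ like any other heavy sample, so that no ``both fully sampled'' input is left without a valid compression.
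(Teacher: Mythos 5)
There is a genuine gap: by the definition of an unlabeled compression scheme, $\alpha(g)$ must be a \emph{subset of the domain} $S$ of the labeled sample $g$, and your heavy branch violates this. A sample is heavy as soon as the three $1$-positions of each coordinate function are all sampled; its domain need not be anywhere near all of $B_1\sqcup B_2$. Concretely, take $(f_1,f_2)\in W_6*W_6$ and let $g$ be the restriction of $(f_1,f_2)$ to its six $1$-positions, three in each copy. This $g$ is heavy, but $\alpha(g)=\mu(f_1,f_2)$ contains at least four points of one copy $B_i$, whereas $|S\cap B_i|=3$, so $\alpha(g)\not\subseteq S$. For such samples no ``lopsided'' compressed set is admissible at all, so the counting argument ($105\ge 100$) does not help; it only shows that enough lopsided sets exist when the whole base is sampled.

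The obvious repair --- use the $\mu$-encoding only when the domain is the entire base, where the subset constraint is vacuous --- does not close the gap by itself: a heavy but not fully sampled $g$ as above still shows three $1$'s in each copy, so your non-heavy rule ``keep the $1$-positions'' would keep six points, exceeding size $5$. Handling precisely these samples is what the five-rule case analysis of Proposition~\ref{prop:2C5} is for (keep a triple from one copy and the $1$'s or $0$'s, as appropriate, from the other, the kept triple's positions telling the decoder which rule applied, since in $W_6$ too a $3$-set can be the position set of a triple $1$ or of a triple $0$ but not both). The paper's proof runs that scheme unchanged on $W_6*W_6$ and reserves the lopsided sets (its count is $222$) exclusively for the $100$ fully sampled inputs, which is the only place where your injection idea is sound. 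So the core difficulty --- samples revealing all $1$'s of both copies without revealing everything --- is exactly the part your scheme leaves unhandled.
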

\begin{proof}
This compression goes similarly to the one presented in Table \ref{tab:2C5} for $C_5 * C_5$.
In fact, we can use exactly the same compression scheme unless we get two triples in both $W_6$'s, i.e., a labeled sample that contains all $12$ elements of the base.
There are $10\cdot 10=100$ possibilities for such a sample, and for each we can pick a compression that keeps at least $4$ elements from at least one of the two copies of $W_6$, as these were not used yet.
There are $\binom 65\cdot \binom 60+\binom 64\cdot \binom 61+\binom64\cdot\binom60+\binom 60\cdot\binom64+\binom61\cdot \binom 64+\binom 60\cdot \binom 65=222$ such possible compressed samples, we can use a distinct one for each of the $100$ problematic labeled samples. This makes the decoding possible.
\end{proof}

%

We end by a summary of the most important questions left open.

\subsubsection*{Summary of main open questions}

\begin{itemize}
	
\item Is $\ucs(\F)-\vc(\F)$ bounded?

\item Is $\ucs(\F*\G) \ge \ucs(\F)+\ucs(\G)-1$? 

\item How does $\ucs(C_5^{*n})$ behave? Does $\lim \ucs(n * \F)/n$ exist?

\item Is there a $k$ for every \F such that $\ucs(\F * P(k))=\vc(\F)+k$? 

\end{itemize} 

%

\subsection*{Remarks and acknowledgment}
We would like to thank Tam\'as M\'esz\'aros, Shay Moran and Manfred Warmuth for useful discussions and calling our attention to new developments.

\end{document}